\newtheorem{theorem}{{Theorem}}
\newtheorem*{theorem*}{Theorem}
\newtheorem{proposition}[theorem]{Proposition}
\newtheorem{corollary}[theorem]{Corollary}
\newtheorem*{corollary*}{Corollary}
\theoremstyle{definition}
\theoremstyle{remark}
\definecolor{ForestGreen}{RGB}{34,139,34}
\theoremstyle{definition}
\newtheorem*{remark*}{Remark}
\newtheorem*{idea*}{Idea}
\newtheorem*{definition*}{Definition}
\newtheorem*{claim*}{Claim}
\newcommand{\nc}{\newcommand}
\nc{\on}{\operatorname}
\newcommand{\p}{\partial}
\title[Fill-ins with scalar curvature bounded from below]{On fill-ins with scalar curvature bounded from below and an inequality of Hijazi--Montiel--Rold\'an}
\date{\today}
\author[Simon Brendle]{Simon Brendle}
\address{
{\href{mailto:simon.brendle@columbia.edu}{simon.brendle@columbia.edu}}}
\author[Raphael Tsiamis]{Raphael Tsiamis}
\address{
{\href{mailto:r.tsiamis@columbia.edu}{r.tsiamis@columbia.edu}}}
\author[Yipeng Wang]{Yipeng Wang}
\address{
{\href{mailto:yw3631@columbia.edu}{yw3631@columbia.edu}}}
\begin{document}

\begin{abstract}
We consider fill-ins of spin manifolds with scalar curvature bounded by $-n(n-1)$. Gromov proposed a conjecture relating the infimum of the mean curvature of such a fill-in to the hyperspherical radius. We observe that the inequality conjectured by Gromov follows by combining an inequality of Hijazi--Montiel--Rold\'an for the first Dirac eigenvalue with a recent theorem of B\"ar. Moreover, we give an alternative proof of the Hijazi--Montiel--Rold\'an inequality based on the work of B\"ar and B\"ar-Ballmann.
\end{abstract}

\maketitle

\vspace*{-0.22in}

\section{Introduction}

Fill-ins with scalar curvature bounded from below play a major role in geometry, allowing us to localize results for non-compact manifolds, such as the positive mass theorem.
In what follows, we let $\Sigma$ be a closed connected orientable Riemannian manifold. Hijazi--Montiel--Rold\'an~\cite{HMR}*{Theorem 2} proved the following inequality for the smallest Dirac eigenvalue.

\begin{theorem}[Hijazi--Montiel--Rold\'an]
\label{eigenvalue}
Let $(M^n, g)$ be a compact, connected Riemannian spin manifold of dimension $n\ge 3$ with connected boundary $\Sigma$.
If $R\geq -n(n-1)$ holds at each point of $M$, then
\[\inf_{\partial M}H\le \sqrt{(n-1)^2+4\lambda^2},\] 
where $\lambda = \sqrt{\lambda_{\text{\rm min}}(\mathcal{D}^2)}$ and $\mathcal{D}$ denotes the Dirac operator on the boundary. 
\end{theorem}

The proof given by Hijazi--Montiel--Rold\'an uses spinors. Specifically, they use boundary value problem for a modified Dirac operator. In this paper, we give an alternative proof of the result of Hijazi--Montiel--Rold\'an, which is based on a different boundary value problem for the Dirac operator. This argument relies on work of B\"ar \cite{Baer} and B\"ar-Ballmann \cite{bar-ballmann}.

Gromov and Lawson~\cite{gromov-lawson} introduced the following notion in their proof of Geroch's conjecture on the non-existence of positive scalar curvature metrics on tori. 
\begin{definition*}
The hyperspherical radius of $\Sigma$, denoted by $\on{Rad}(\Sigma)$, is defined as the supremum of all $r>0$ such that there exists a $1$-Lipschitz map $\Sigma \to \mathbf{S}^{\dim \Sigma}(r)$ with non-trivial degree.
\end{definition*}
The hyperspherical radius is a natural geometric quantity. For a spin manifold, Llarull's theorem \cite{Llarull1998} gives an upper bound for the infimum of the scalar curvature in terms of its hyperspherical radius. 

B\"ar~\cite{Baer} recently showed that, if $\Sigma$ is a compact spin manifold of dimension $n-1$, then 
\[\lambda \leq \frac{n-1}{2} \, \on{Rad} (\Sigma)^{-1},\] 
where again $\lambda = \sqrt{\lambda_{\text{\rm min}}(\mathcal{D}^2)}$. Combining the Hijazi--Montiel--Rold\'an inequality with B\"ar's inequality, one can draw the following conclusion.

\begin{corollary}
\label{main}
Let $(M^n, g)$ be a compact, connected Riemannian spin manifold of dimension $n\ge 3$ with connected boundary $\Sigma$.
If $R\ge -n(n-1)$ holds at each point of $M$, then
\[
\inf_{\partial M}H\le (n-1)\sqrt{1+{\on{Rad}(\Sigma)^{-2}}}.
    \]
\end{corollary}

Corollary \ref{main} answers a question posed by Gromov~\cite{gromov}*{p.~110}. It shares some common features with the systolic inequality proved by the first author and Pei-Ken Hung~\cite{brendle-hung-I}.

\section{An alternative proof of the Hijazi--Montiel--Rold\'an inequality}

In this section, we adapt an argument in B\"ar's paper \cite{Baer}*{Appendix A} to
give an alternative proof of the Hijazi--Montiel--Rold\'an inequality. Let $(M^n,g)$ be a compact spin manifold with $n\ge 3$, and $\mathcal{S}_M \to M$ be the spinor bundle. We denote by $\mathcal{D}^M$ the Dirac operator acting on sections of $\mathcal{S}_M$; in a local orthonormal frame $\{ e_1,\cdots,e_n \}$, the Dirac operator is given by $\mathcal{D}^M s=\sum_{k=1}^n e_k \cdot \nabla^M_{e_k} s$. 
Given a section $s\in C^{\infty}(M,\mathcal{S}_M)$ and a smooth vector field $X$ on $M$, we define
\[
\hat{\nabla}^M_X s=\nabla^M_Xs+\frac{i}{2} \, X\cdot s.
\] 
We denote by $\mathcal{D}^{\partial M}$ the boundary Dirac operator acting on sections of $\mathcal{S}_M|_{\partial M}$. Given a section $s$ of $\mathcal{S}_M|_{\partial M}$, this operator is defined by 
$$\mathcal{D}^{\partial M}s=\sum_{k=1}^{n-1} \nu\cdot e_k\cdot \nabla^M_{e_k} s+\frac{1}{2}H\,s,$$
where $H$ denotes the mean curvature of the boundary and $\{e_1,\cdots,e_{n-1}\}$ is a local orthonormal frame on $\p M$. 
It is standard that Clifford multiplication by $\nu$ anticommutes with $\mathcal{D}^{\partial M}$.

\begin{proposition}{\label{prop:weitzenbock}}
    For any $s\in C^{\infty}(M,\mathcal{S}_M)$, we have
    \begin{align*}
       &-\int_M \big | \mathcal{D}^M s-\frac{in}{2}s \big |^2+\int_M|\hat{\nabla}^M s|^2+\frac{1}{4}\int_M R\,|s|^2+\frac{n(n-1)}{4}\int_M |s|^2\\
    &\quad =\int_{\partial M}\langle \mathcal{D}^{\partial M}s,s\rangle -\frac{1}{2}\int_{\partial M}H\,|s|^2-\frac{n-1}{2}\int_{\partial M} \langle i\,\nu\cdot s,s\rangle
    \end{align*}
\end{proposition}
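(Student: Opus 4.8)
The plan is to start from the standard Schr\"odinger--Lichnerowicz formula for the modified connection $\hat\nabla^M$ and integrate by parts. First I would record the pointwise Bochner identity: writing $\hat\nabla^M_X s = \nabla^M_X s + \tfrac{i}{2}X\cdot s$, one computes $(\hat\nabla^M)^*\hat\nabla^M s$ in a local orthonormal frame and compares it with $(\mathcal{D}^M - \tfrac{in}{2})^*(\mathcal{D}^M - \tfrac{in}{2})s$. The usual Lichnerowicz formula $(\mathcal{D}^M)^2 = (\nabla^M)^*\nabla^M + \tfrac14 R$ gets corrected by the extra algebraic terms coming from the $\tfrac{i}{2}X\cdot$ summand; since $\sum_k e_k\cdot e_k\cdot = -n$ and the cross terms $\sum_k e_k\cdot e_j\cdot + e_j\cdot e_k\cdot = -2\delta_{jk}$ collapse nicely, the zeroth-order contribution works out to exactly $\tfrac14 R + \tfrac{n(n-1)}{4}$ times the identity. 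Concretely, the pointwise identity should read
\[
\big|\mathcal{D}^M s - \tfrac{in}{2}s\big|^2 - |\hat\nabla^M s|^2 - \tfrac14 R|s|^2 - \tfrac{n(n-1)}{4}|s|^2 = \operatorname{div}(V)
\]
for a suitable vector field $V$ built from $s$ and $\hat\nabla^M s$ (equivalently, from $s$ and $\mathcal{D}^M s$).

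Next I would integrate this divergence identity over $M$ and apply the divergence theorem, so the left-hand side of the proposition (after moving everything to one side) becomes $\int_{\partial M}\langle V,\nu\rangle$. The content of the remaining work is to identify this boundary term. Contracting the $\hat\nabla^M$-terms against $\nu$ produces $\sum_k \langle \nu\cdot e_k\cdot \nabla^M_{e_k}s, s\rangle$ for the tangential directions together with the normal-direction piece; collecting the tangential part and adding the $\tfrac12 H$ coming from the trace of the second fundamental form reconstitutes $\langle \mathcal{D}^{\partial M}s, s\rangle - \tfrac12\int_{\partial M}H|s|^2$ (the sign and the factor of $H$ are exactly arranged so that the boundary Dirac operator as defined above appears). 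The extra algebraic term $\tfrac{i}{2}X\cdot$ in $\hat\nabla^M$ contributes, upon contracting with $\nu$, precisely the term $-\tfrac{n-1}{2}\int_{\partial M}\langle i\,\nu\cdot s, s\rangle$; here one uses that $\langle i\nu\cdot s, s\rangle$ is real (Clifford multiplication by a unit vector is skew-adjoint, so $i\nu\cdot$ is self-adjoint) so this boundary integrand is well defined.

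The main obstacle is purely bookkeeping: getting every sign and every combinatorial constant right, particularly the coefficient $\tfrac{n(n-1)}{4}$ from the $\tfrac{i}{2}X\cdot$ correction and the $\tfrac{n-1}{2}$ on the boundary, and making sure the conventions for $\mathcal{D}^{\partial M}$ (the $\nu\cdot e_k\cdot$ ordering and the $+\tfrac12 H$) are the ones that make the tangential boundary terms assemble correctly. A clean way to avoid a proliferation of cross terms is to fix, at the point where the boundary term is evaluated, a local orthonormal frame with $e_n = \nu$ and $\nabla^M_{e_j}e_k = 0$ tangentially at that point, so that the second fundamental form enters only through $\nabla^M_{e_j}\nu$; then the identity can be verified termwise. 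Since the proposition is asserted to be well known, I would keep this verification brief and refer to the standard references for the Lichnerowicz--Witten technique adapted to the hyperbolic (cosmological-constant) setting.
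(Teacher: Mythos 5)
Your proposal is correct and follows essentially the same approach as the paper: an integrated Weitzenb\"ock identity for the modified connection, with the boundary terms identified via the divergence theorem and the formula $\nu\cdot\mathcal{D}^M s + \nabla^M_\nu s = \mathcal{D}^{\partial M}s - \tfrac{1}{2} H s$. The paper organizes the computation into three separate integral identities (the standard integrated Weitzenb\"ock formula, a pointwise comparison of $|\hat\nabla^M s|^2$ with $|\nabla^M s|^2$ that integrates with no boundary term, and an algebraic identity relating $|\mathcal{D}^M s - \tfrac{in}{2}s|^2$ to $|\mathcal{D}^M s|^2$) and sums them rather than assembling a single pointwise divergence identity as you propose, which keeps each piece of bookkeeping simpler; note also that the coefficient $n-1$ in the boundary term $-\tfrac{n-1}{2}\int_{\partial M}\langle i\nu\cdot s,s\rangle$ arises from the interplay of the $-\tfrac{in}{2}$ shift in the Dirac operator and the $\tfrac{i}{2}X\cdot$ shift in the connection (as $\tfrac{n}{2}-\tfrac{1}{2}$), not from the connection correction alone.
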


Proposition \ref{prop:weitzenbock} is well known. We omit the proof.

Let $\Sigma$ denote the boundary of $M$ with the induced metric. 
We denote by $\mathcal{S}$ and $\mathcal{D}$ the intrinsic spinor bundle and Dirac operator of $\Sigma$, respectively. 
If $n$ is odd, then $\mathcal{S}$ can be identified with $\mathcal{S}_M|_{\partial M}$, and $\mathcal{D}$ can be identified with $\mathcal{D}^{\partial M}$.
If $n$ is even, we decompose $\mathcal{S}_M = \mathcal{S}_M^+ \oplus \mathcal{S}_M^-$, where $\mathcal{S}_M^+$ and $\mathcal{S}_M^-$ denote the eigenbundles of the volume form on $M$. 
Clifford multiplication by $\nu$ interchanges $\mathcal{S}^+_M|_{\partial M}$ and $\mathcal{S}^-_M|_{\partial M}$. 
Moreover, $\mathcal{S}^+_M|_{\partial M}$ and $\mathcal{S}^-_M|_{\partial M}$ are invariant under the boundary Dirac operator $\mathcal{D}^{\partial M}$. 
With this understood, $\mathcal{S}$ can be identified with $\mathcal{S}^+_M|_{\partial M}$, and $\mathcal{D}$ can be identified with the restriction of $\mathcal{D}^{\partial M}$ to $\mathcal{S}^+_M|_{\partial M}$.
Thus, the boundary Dirac operator can be written in the form
\[\mathcal{D}^{\partial M}= \begin{bmatrix} \mathcal{D} & 0 \\ 0 & -\mathcal{D}
\end{bmatrix}.\]
For either parity, the spectrum of $\mathcal{D}^{\partial M}$ is symmetric about $0$, and we define 
\[\lambda = \sqrt{\lambda_{\text{\rm min}}((\mathcal{D}^{\partial M})^2)} =
\sqrt{\lambda_{\text{\rm min}}( \mathcal{D}^2)}\] 
under these identifications.

Let $\lambda_j$ denote the eigenvalues of $\mathcal{D}^{\partial M}$. By definition of $\lambda$, we have $|\lambda_j| \geq \lambda$ for each $j$. For each $j$, we denote by $E_{\lambda_j}$ the eigenspace of $\mathcal{D}^{\partial M}$ with eigenvalue $\lambda_j$. Any $s\in H^{\frac{1}{2}}(\p M,\mathcal{S}_M|_{\partial M})$ can be expanded as $s=\sum_j s_{\lambda_j}$, where $s_{\lambda_j} \in E_{\lambda_j}$. We define $W$ to be the subspace of all $s\in H^{\frac{1}{2}}(\p M,\mathcal{S}_M|_{\partial M})$ such that $s_{\lambda_j}=0$ for all $\lambda_j \in (\lambda,\infty)$. Moreover, we define $\tilde{W}$ to be the subspace of all $s\in H^{\frac{1}{2}}(\p M,\mathcal{S}_M|_{\partial M})$ such that $s_{\lambda_j}=0$ for all $\lambda_j \in [-\lambda,\infty)$. 

We consider the Hilbert spaces $\mathcal{H}=\{s\in H^1(M,\mathcal{S}_M): s|_{\partial M}\in W\}$ and $\tilde{\mathcal{H}}=\{s\in H^1(M,\mathcal{S}_M): s|_{\partial M}\in \tilde{W}\}$. The following result was proved by B\"ar \cite{Baer}*{Appendix A}, building on earlier work of B\"ar and Ballmann \cite{bar-ballmann}.

\begin{proposition}[C.~B\"ar \cite{Baer}, Appendix A]
\label{index}
The operator $\mathcal{D}^M:\mathcal{H}\to L^2(M,\mathcal{S}_M)$ is a Fredholm operator and its Fredholm index is given by 
\[\text{\rm ind}(\mathcal{D}^M:\mathcal{H}\to L^2(M,\mathcal{S}_M)) = \frac{1}{2} \sum_{\lambda_j \in [-\lambda,\lambda]} \dim_{\mathbb{C}} E_{\lambda_j}.\] 
\end{proposition}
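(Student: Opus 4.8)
The plan is to deduce the formula by playing the two boundary value problems $(\mathcal{D}^M,\mathcal{H})$ and $(\mathcal{D}^M,\tilde{\mathcal{H}})$ against each other: they turn out to be mutually adjoint, and their boundary conditions differ by a finite-dimensional subspace of the boundary data, so the two resulting index relations together pin down the index. I would carry this out in three steps, all of them resting on the theory of elliptic boundary conditions for Dirac-type operators of B\"ar and Ballmann \cite{bar-ballmann} (as developed in \cite{baer}*{Appendix A}).

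First, I would establish Fredholmness. Since $\partial M$ is closed, $\mathcal{D}^{\partial M}$ is a self-adjoint elliptic operator of order one with discrete spectrum accumulating only at $\pm\infty$. By construction $W$ is the $H^{1/2}$-closure of $\bigoplus_{\lambda_j\le\lambda}E_{\lambda_j}$ and $\tilde{W}$ that of $\bigoplus_{\lambda_j<-\lambda}E_{\lambda_j}$, so the associated $L^2$-orthogonal projections are the spectral projections $\chi_{(-\infty,\lambda]}(\mathcal{D}^{\partial M})$ and $\chi_{(-\infty,-\lambda)}(\mathcal{D}^{\partial M})$, each of which differs from the Atiyah--Patodi--Singer projection $\chi_{(-\infty,0]}(\mathcal{D}^{\partial M})$ by a finite-rank projection (only finitely many eigenvalues lie in $[-\lambda,\lambda]$). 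Hence $W$ and $\tilde{W}$ are elliptic boundary conditions in the sense of \cite{bar-ballmann}, and both $\mathcal{D}^M\colon\mathcal{H}\to L^2(M,\mathcal{S}_M)$ and $\mathcal{D}^M\colon\tilde{\mathcal{H}}\to L^2(M,\mathcal{S}_M)$ are Fredholm.

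Second, I would identify the adjoint boundary value problem. The Green's formula $\int_M(\langle\mathcal{D}^M s,t\rangle-\langle s,\mathcal{D}^M t\rangle)=-\int_{\partial M}\langle\nu\cdot s,t\rangle$ shows that the $L^2$-adjoint of $(\mathcal{D}^M,W)$ is $(\mathcal{D}^M,W^{\mathrm{ad}})$ with $W^{\mathrm{ad}}=(\nu\cdot W)^{\perp}$. Since Clifford multiplication by $\nu$ anticommutes with $\mathcal{D}^{\partial M}$, it maps $E_{\lambda_j}$ isometrically onto $E_{-\lambda_j}$; thus $\nu\cdot W$ is the closure of $\bigoplus_{\lambda_j\ge-\lambda}E_{\lambda_j}$, and therefore $W^{\mathrm{ad}}$ is the closure of $\bigoplus_{\lambda_j<-\lambda}E_{\lambda_j}=\tilde{W}$ (and symmetrically $\tilde{W}^{\mathrm{ad}}=W$). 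As $\mathcal{D}^M$ is formally self-adjoint, this identifies $\operatorname{coker}(\mathcal{D}^M\colon\mathcal{H}\to L^2(M,\mathcal{S}_M))$ with $\ker(\mathcal{D}^M\colon\tilde{\mathcal{H}}\to L^2(M,\mathcal{S}_M))$ and conversely, so that
\[
\operatorname{ind}(\mathcal{D}^M\colon\mathcal{H}\to L^2(M,\mathcal{S}_M))=-\operatorname{ind}(\mathcal{D}^M\colon\tilde{\mathcal{H}}\to L^2(M,\mathcal{S}_M)).
\]

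Third, since $\tilde{W}\subseteq W$ with $W\ominus\tilde{W}=\bigoplus_{\lambda_j\in[-\lambda,\lambda]}E_{\lambda_j}$ finite-dimensional, the relative index formula for comparable elliptic boundary conditions of \cite{bar-ballmann} gives
\[
\operatorname{ind}(\mathcal{D}^M\colon\mathcal{H}\to L^2)-\operatorname{ind}(\mathcal{D}^M\colon\tilde{\mathcal{H}}\to L^2)=\dim_{\mathbb{C}}(W/\tilde{W})=\sum_{\lambda_j\in[-\lambda,\lambda]}\dim_{\mathbb{C}}E_{\lambda_j},
\]
and adding this to the identity of the previous step yields the asserted value (in particular the right-hand side is automatically even, being twice an index). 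I expect the only substantive input to be the machinery quoted from \cite{bar-ballmann}: that spectral boundary conditions of this kind are elliptic, that the adjoint boundary condition is computed via the Green's form as $(\nu\cdot W)^{\perp}$, and that the relative index of nested elliptic boundary conditions is their codimension. Granting these, the rest is bookkeeping with the spectral decomposition of $\mathcal{D}^{\partial M}$ and the anticommutation of $\nu$ with it.
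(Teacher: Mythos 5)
Your proposal follows essentially the same route as the paper: combine the Bär--Ballmann relative index formula for the nested boundary conditions $\tilde{W}\subset W$ with the duality identity $\operatorname{ind}(\mathcal{D}^M,\mathcal{H})=-\operatorname{ind}(\mathcal{D}^M,\tilde{\mathcal{H}})$, which the paper cites from \cite{bar-ballmann}*{Section 7.2} and which you unpack via the Green's formula and the anticommutation of $\nu$ with $\mathcal{D}^{\partial M}$. The reasoning and the two key inputs are the same; you simply spell out the details the paper delegates to the references.
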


\begin{proof}
We include the details of B\"ar's argument for the convenience of the reader. Corollary 8.8 in \cite{bar-ballmann} implies that 
\begin{align*} 
&\text{\rm ind}(\mathcal{D}^M:\mathcal{H}\to L^2(M,\mathcal{S}_M)) - \text{\rm ind}(\mathcal{D}^M:\tilde{\mathcal{H}}\to L^2(M,\mathcal{S}_M)) \\ 
&= \text{\rm dim}_{\mathbb{C}}(W/\tilde{W}) = \sum_{\lambda_j \in [-\lambda,\lambda]} \dim_{\mathbb{C}} E_{\lambda_j}. 
\end{align*}
On the other hand, 
\[\text{\rm ind}(\mathcal{D}^M:\tilde{\mathcal{H}}\to L^2(M,\mathcal{S}_M)) = -\text{\rm ind}(\mathcal{D}^M:\mathcal{H}\to L^2(M,\mathcal{S}_M))\] 
by duality (see \cite{bar-ballmann}*{Section 7.2}). Putting these facts together, the assertion follows. 
\end{proof}

Let $\mathcal{Z}$ denote the set of all sections $s \in H^1(M,\mathcal{S}_M)$ such that $\mathcal{D}^M s-\frac{in}{2}s=0$ and $s|_{\partial M} \in W$. Since the Fredholm index is unchanged by compact perturbations, we obtain 
\begin{align*} 
\dim_{\mathbb{C}} \mathcal{Z} 
&\geq \text{\rm ind} \Big( \mathcal{D}^M - \frac{in}{2}:\mathcal{H}\to L^2(M,\mathcal{S}_M) \Big ) \\ 
&= \text{\rm ind}(\mathcal{D}^M:\mathcal{H}\to L^2(M,\mathcal{S}_M)) \\ 
&= \frac{1}{2} \sum_{\lambda_j \in [-\lambda,\lambda]} \dim_{\mathbb{C}} E_{\lambda_j}. 
\end{align*}
In particular, $\dim_{\mathbb{C}} \mathcal{Z} > 0$. The results in \cite{bar-ballmann}*{Section 7.4} imply that every section $s \in \mathcal{Z}$ is smooth up to the boundary. 

\begin{proposition}
\label{rigidity}
Suppose that $s \in \mathcal{Z}$. Then 
\[\int_{\partial M} H \, |s|^2 \leq \sqrt{(n-1)^2+4\lambda^2}\,\int_{\partial M} |s|^2.\] 
Moreover, if equality holds, then $\hat{\nabla}^M s=0$ at each point in $M$. 
\end{proposition}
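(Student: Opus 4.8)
The plan is to test the Weitzenb\"ock identity of Proposition~\ref{prop:weitzenbock} against a section $s\in\mathcal{Z}$ and to drop every term of favorable sign, reducing the proposition to a spectral estimate on $\partial M$. Since $s\in\mathcal{Z}$ satisfies $\mathcal{D}^M s-\frac{in}{2}s=0$, the first term on the left of that identity vanishes; since $R\ge -n(n-1)$ on $M$, the combined curvature term $\frac14\int_M(R+n(n-1))|s|^2$ is nonnegative; and $\int_M|\hat\nabla^M s|^2\ge 0$. Thus Proposition~\ref{prop:weitzenbock} yields
\begin{align*}
&\tfrac12\int_{\partial M}H\,|s|^2+\int_M|\hat\nabla^M s|^2+\tfrac14\int_M\big(R+n(n-1)\big)|s|^2\\
&\qquad=\int_{\partial M}\langle\mathcal{D}^{\partial M}s,s\rangle-\tfrac{n-1}{2}\int_{\partial M}\langle i\,\nu\cdot s,s\rangle=:Q(s).
\end{align*}
Hence it is enough to prove the boundary estimate $Q(s)\le\tfrac12\sqrt{(n-1)^2+4\lambda^2}\int_{\partial M}|s|^2$ for every $s$ with $s|_{\partial M}\in W$: the inequality of the proposition follows immediately, and the equality case will come from tracing back through this chain.

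For the boundary estimate I would expand $s|_{\partial M}=\sum_j s_{\lambda_j}$ in eigensections of $\mathcal{D}^{\partial M}$. Clifford multiplication by $\nu$ anticommutes with $\mathcal{D}^{\partial M}$ and satisfies $(i\,\nu\cdot)^2=1$, so $i\,\nu\cdot$ is a self-adjoint $L^2$-isometry carrying $E_{\lambda_j}$ onto $E_{-\lambda_j}$; consequently both operators in $Q$ preserve the $L^2$-orthogonal decomposition of $\mathcal{S}_M|_{\partial M}$ into $E_0$ and the pairs $E_\mu\oplus E_{-\mu}$ ($\mu>0$), and $Q(s)$ splits as a sum of the corresponding block contributions. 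We may assume $\lambda>0$; if $\lambda=0$ then $W$ is spanned by the nonpositive eigenspaces and the estimate is immediate since $\int_{\partial M}\langle\mathcal{D}^{\partial M}s,s\rangle\le 0$ while $\tfrac12\sqrt{(n-1)^2+4\lambda^2}=\tfrac{n-1}{2}$. Because $s|_{\partial M}\in W$ and every nonzero eigenvalue has modulus $\ge\lambda$, on a block $E_\mu\oplus E_{-\mu}$ with $\mu>\lambda$ the component of $s$ lies entirely in $E_{-\mu}$, where $\int_{\partial M}\langle i\,\nu\cdot s,s\rangle=0$ by orthogonality and the contribution equals $-\mu\int_{\partial M}|s|^2\le 0$; on $E_0$ the contribution is $-\tfrac{n-1}{2}\int_{\partial M}\langle i\,\nu\cdot s,s\rangle\le\tfrac{n-1}{2}\int_{\partial M}|s|^2$ by Cauchy--Schwarz. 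It remains to treat the block $E_\lambda\oplus E_{-\lambda}$: writing $s=s_++s_-$ with $a=\|s_+\|_{L^2}$ and $b=\|s_-\|_{L^2}$, one has $\int_{\partial M}\langle\mathcal{D}^{\partial M}s,s\rangle=\lambda(a^2-b^2)$, while $i\,\nu\cdot$ exchanges the two eigenspaces, so the diagonal part of $\int_{\partial M}\langle i\,\nu\cdot s,s\rangle$ vanishes and Cauchy--Schwarz gives $\big|\int_{\partial M}\langle i\,\nu\cdot s,s\rangle\big|\le 2ab$. Thus the block contribution is at most $\lambda a^2-\lambda b^2+(n-1)ab$, and one checks that
\[
\tfrac12\sqrt{(n-1)^2+4\lambda^2}\,(a^2+b^2)-\big(\lambda a^2-\lambda b^2+(n-1)ab\big)
\]
is a positive semidefinite quadratic form in $(a,b)$: its $a^2$-coefficient $\tfrac12\sqrt{(n-1)^2+4\lambda^2}-\lambda$ is nonnegative, and its discriminant is $(n-1)^2-4\big(\tfrac14(n-1)^2+\lambda^2-\lambda^2\big)=0$. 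Summing the block contributions gives $Q(s)\le\tfrac12\sqrt{(n-1)^2+4\lambda^2}\int_{\partial M}|s|^2$, the required estimate.

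For the equality case, assume $\int_{\partial M}H\,|s|^2=\sqrt{(n-1)^2+4\lambda^2}\int_{\partial M}|s|^2$. The first displayed identity together with the estimate just proved give
\[
\tfrac12\int_{\partial M}H\,|s|^2\le Q(s)\le\tfrac12\sqrt{(n-1)^2+4\lambda^2}\int_{\partial M}|s|^2 ,
\]
and the hypothesis makes the two outer quantities equal, so all three coincide; in particular $Q(s)=\tfrac12\int_{\partial M}H\,|s|^2$. By the first displayed identity this forces $\int_M|\hat\nabla^M s|^2+\tfrac14\int_M(R+n(n-1))|s|^2=0$, and since both summands are nonnegative and $s$ is smooth up to the boundary, $\hat\nabla^M s\equiv 0$ on $M$.

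The one delicate point is the sharpness of the constant on the block $E_\lambda\oplus E_{-\lambda}$: the crude bounds $\int_{\partial M}\langle\mathcal{D}^{\partial M}s,s\rangle\le\lambda\int_{\partial M}|s|^2$ and $-\tfrac{n-1}{2}\int_{\partial M}\langle i\,\nu\cdot s,s\rangle\le\tfrac{n-1}{2}\int_{\partial M}|s|^2$ only give the weaker constant $\lambda+\tfrac{n-1}{2}>\tfrac12\sqrt{(n-1)^2+4\lambda^2}$, so one must genuinely exploit that $\mathcal{D}^{\partial M}$ and $i\,\nu\cdot$ act off-diagonally against each other on this pair of eigenspaces and diagonalize the resulting $2\times 2$ form. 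The exact vanishing of its discriminant is what produces the optimal constant $\tfrac12\sqrt{(n-1)^2+4\lambda^2}$ and, through the rigid chain above, the equality characterization $\hat\nabla^M s\equiv 0$.
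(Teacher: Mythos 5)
Your proof is correct and takes essentially the same route as the paper: both apply the Weitzenb\"ock identity of Proposition~\ref{prop:weitzenbock} to $s \in \mathcal{Z}$, drop the nonnegative bulk terms $\int_M |\hat\nabla^M s|^2$ and $\tfrac14\int_M (R+n(n-1))|s|^2$, and reduce to a boundary spectral estimate proved via Cauchy--Schwarz on a splitting of $s|_{\partial M}$ according to the sign of the $\mathcal{D}^{\partial M}$-eigenvalue, culminating in the same algebraic inequality $\sqrt{(n-1)^2+4\lambda^2}\,(a^2+b^2)\ge 2\lambda(a^2-b^2)+2(n-1)ab$. The only cosmetic differences are that the paper lumps all components with eigenvalue $\le -\lambda$ into a single $s_-$ and performs the final Cauchy--Schwarz pointwise in the integrand, while you decompose block-by-block over $E_\mu\oplus E_{-\mu}$ and work with $L^2$ norms; also, the $E_0$ aside you include under the case $\lambda>0$ is vacuous, since $\lambda>0$ forces $E_0=\{0\}$.
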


\begin{proof} 
We distinguish two cases:

\textit{Case 1:} Suppose that $\lambda=0$. Since $s|_{\partial M} \in W$, we know that 
\[\int_{\partial M} \langle \mathcal{D}^{\partial M} s, s \rangle \leq 0.\] 
Using Proposition~\ref{prop:weitzenbock}, we obtain 
\begin{align*}
    0&\leq \int_{\partial M}\langle \mathcal{D}^{\partial M}s,s\rangle -\frac{1}{2}\int_{\partial M}H\,|s|^2-\frac{n-1}{2}\int_{\partial M} \langle i\,\nu\cdot s,s\rangle\\
    &\leq -\frac{1}{2}\int_{\partial M}H\,|s|^2-\frac{n-1}{2}\int_{\partial M} \langle i\,\nu\cdot s,s\rangle.
\end{align*}
Applying the Cauchy-Schwarz inequality, we conclude that  
\[\int_{\partial M} H \, |s|^2 \leq (n-1) \int_{\partial M} |s|^2.\] 
Moreover, if equality holds, then $\int_M |\hat{\nabla}^M s|^2=0$.

\textit{Case 2:} Suppose that $\lambda>0$. Since $s|_{\partial M} \in W$, we may write $s|_{\partial M}=s_++s_-$, where $s_+ \in E_\lambda$ and $s_- \in \tilde{W} \oplus E_{-\lambda}$. This implies 
\[
\int_{\partial M} \langle \mathcal{D}^{\partial M} s, s \rangle = \int_{\partial M} \lambda \, | s_+ |^2 + \sum_{\lambda_j \in (-\infty,-\lambda]} \int_{\partial M} \lambda_j \, | s_{\lambda_j} |^2 \leq \int_{\partial M} \lambda |s_+|^2 - \int_{\partial M} \lambda |s_-|^2.
\]
Since Clifford multiplication by $\nu$ interchanges the positive and negative eigenspaces of $\mathcal{D}^{\partial M}$, we know that 
\[\int_{\partial M} \langle \nu \cdot s_+,s_+ \rangle = \int_{\partial M} \langle \nu \cdot s_-,s_- \rangle = 0.\]
Using Proposition~\ref{prop:weitzenbock}, we obtain 
\begin{align*}
    0&\leq \int_{\partial M}\langle \mathcal{D}^{\partial M}s,s\rangle -\frac{1}{2}\int_{\partial M}H\,|s|^2-\frac{n-1}{2}\int_{\partial M} \langle i\,\nu\cdot s,s\rangle\\
    &\leq \int_{\partial M} \lambda \, |s_+|^2-\int_{\partial M}\lambda \, |s_-|^2-\frac{1}{2}\int_{\partial M}H\,|s|^2-\frac{n-1}{2}\int_{\partial M} \langle i\,\nu\cdot s,s\rangle\\
    &=\int_{\partial M} \lambda \, |s_+|^2-\int_{\partial M}\lambda \, |s_-|^2-\frac{1}{2}\int_{\partial M}H\,|s|^2\\
    & \quad  -\frac{n-1}{2}\int_{\partial M} \langle i\,\nu\cdot s_+,s_-\rangle-\frac{n-1}{2}\int_{\partial M} \langle i\,\nu\cdot s_-,s_+\rangle.
\end{align*}
Applying the Cauchy-Schwarz inequality, we conclude that  
\begin{align*}
\int_{\partial M} H\,|s|^2&\le \int_{\partial M} (2\lambda\, (|s_+|^2-|s_-|^2)+2(n-1) \,  |s_+| \, |s_-|) \\
&\leq \int_{\partial M}\sqrt{(n-1)^2+4\lambda^2} \, \sqrt{(|s_+|^2-|s_-|^2)^2+4\,|s_+|^2\,|s_-|^2} \\
&=\sqrt{(n-1)^2+4\lambda^2}\,\int_{\partial M} (|s_+|^2 + |s_-|^2) \\ 
&=\sqrt{(n-1)^2+4\lambda^2}\,\int_{\partial M} |s|^2. 
\end{align*}
Moreover, if equality holds, then $\int_M |\hat{\nabla}^M s|^2=0$.
\end{proof}

\begin{corollary} 
\label{eigenvalue.estimate}
We have 
\[\inf_{\partial M}H\le \sqrt{(n-1)^2+4\lambda^2}.\] 
\end{corollary}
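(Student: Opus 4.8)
The plan is to read off the corollary from Proposition~\ref{rigidity} applied to a suitable section, so the only real work is exhibiting a section of $\mathcal{Z}$ whose restriction to $\partial M$ does not vanish identically. First I would fix $s\in\mathcal{Z}$ with $s\not\equiv 0$; such an $s$ exists because $\dim_{\mathbb{C}}\mathcal{Z}>0$ was established above. Applying Proposition~\ref{rigidity} to $s$ gives
\[
\int_{\partial M}H\,|s|^2\le \sqrt{(n-1)^2+4\lambda^2}\int_{\partial M}|s|^2 .
\]
Since $\inf_{\partial M}H\cdot\int_{\partial M}|s|^2\le \int_{\partial M}H\,|s|^2$ (as $\partial M$ is compact and $H$ is continuous), the estimate $\inf_{\partial M}H\le\sqrt{(n-1)^2+4\lambda^2}$ will follow at once once we know $\int_{\partial M}|s|^2>0$.

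To verify $s|_{\partial M}\not\equiv 0$, I would argue by contradiction. If $s|_{\partial M}=0$, then both sides of the displayed inequality vanish, so equality holds, and the equality clause of Proposition~\ref{rigidity} yields $\hat{\nabla}^M s=0$ on all of $M$. (One can also see this directly from Proposition~\ref{prop:weitzenbock}: with the boundary integrals gone and $\mathcal{D}^M s-\frac{in}{2}s=0$, the hypothesis $R\ge -n(n-1)$ forces $\int_M|\hat{\nabla}^M s|^2\le 0$.) But $\hat{\nabla}^M$ is a genuine linear connection on $\mathcal{S}_M$, so a $\hat{\nabla}^M$-parallel section is transported isomorphically along paths; since $M$ is connected and $s$ vanishes at a point of $\partial M\subset M$, this forces $s\equiv 0$, contradicting the choice of $s$. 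Hence $s|_{\partial M}\not\equiv 0$, so $\int_{\partial M}|s|^2>0$, and dividing the displayed inequality by $\int_{\partial M}|s|^2$ completes the proof.

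I do not expect a serious obstacle here: the argument is essentially a corollary of Proposition~\ref{rigidity}. The one point that needs a word of justification is the non-vanishing of $s|_{\partial M}$, and that reduces to the elementary fact that a section parallel for a connection cannot vanish at a point of a connected manifold without vanishing identically.
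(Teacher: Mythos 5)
Your proof is correct and follows essentially the same route as the paper: apply Proposition~\ref{rigidity} to a nonzero $s\in\mathcal{Z}$, and rule out $s|_{\partial M}=0$ by noting that this would force $\hat{\nabla}^M s=0$ (via the equality case, or directly from Proposition~\ref{prop:weitzenbock}), whence $s\equiv 0$ by parallel transport. The paper packages this as a single contradiction argument starting from $\inf_{\partial M}H>\sqrt{(n-1)^2+4\lambda^2}$, but the substance is identical.
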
 

\begin{proof} 
We argue by contradiction. Suppose that 
\[\inf_{\partial M}H > \sqrt{(n-1)^2+4\lambda^2}.\] 
Since $\dim_{\mathbb{C}} \mathcal{Z} > 0$, we can find a section $s \in H^1(M,\mathcal{S}_M)$ such that $\mathcal{D}^M s-\frac{in}{2}s=0$, $s|_{\partial M} \in W$, and $\int_M |s|^2 > 0$. 
Using Proposition \ref{rigidity}, we conclude that $\hat{\nabla}^M s=0$ in $M$ and $s|_{\partial M}=0$. This is a contradiction.  
\end{proof}

\begin{corollary} 
\label{hyperspherical.radius}
We have
\[\inf_{\partial M}H\le (n-1)\sqrt{1+{\on{Rad}(\Sigma)^{-2}}}.\] 
Moreover, if equality holds, then the spinor bundle $\mathcal{S}_M$ admits a trivialization by sections $s_1,\hdots,s_m \in C^{\infty}(M,\mathcal{S}_M)$ such that $\hat{\nabla}^M s_a=0$ for each $1\le a\le m$. Here, $m = 2^{[\frac{n}{2}]}$ denotes the rank of the spinor bundle $\mathcal{S}_M$.
\end{corollary}

\begin{proof} 
Combining Corollary \ref{eigenvalue.estimate} with B\"ar's inequality~\cite{Baer} 
\[\lambda = \sqrt{\lambda_{\text{\rm min}}( \mathcal{D}^2)} \leq \frac{n-1}{2} \, \on{Rad} (\Sigma)^{-1},
\] 
we obtain 
\[\inf_{\partial M}H\le (n-1)\sqrt{1+{\on{Rad}(\Sigma)^{-2}}}.\] 
Finally, we consider the case of equality. Suppose that 
\[\inf_{\partial M}H = (n-1)\sqrt{1+{\on{Rad}(\Sigma)^{-2}}}.\] 
Then 
\[\lambda = \sqrt{\lambda_{\text{\rm min}}( \mathcal{D}^2)} = \frac{n-1}{2} \, \on{Rad} (\Sigma)^{-1}.
\] 
By B\"ar's rigidity statement~\cite{Baer}, there exists some $r = \on{Rad}(\Sigma) >0$ such that $\Sigma$ is isometric to $\mathbf{S}^{n-1}(r)$ with the round metric. 
Since $\Sigma$ is isometric to a round sphere of dimension $n-1$, we know that $\lambda = \frac{n-1}{2r}$ and the $\pm \lambda$-eigenspaces of $\mathcal{D}$ each have dimension $2^{[\frac{n-1}{2}]}$ (cf. \cite{Baer1996_DiracPositiveCurvature}). 
From this, it is easy to see that the $\pm \lambda$-eigenspaces of $\mathcal{D}^{\partial M}$ each have dimension $2^{[\frac{n}{2}]}$. Thus, 
\[\dim_{\mathbb{C}} \mathcal{Z} \geq \frac{1}{2} \sum_{\lambda_j \in [-\lambda,\lambda]} \dim_{\mathbb{C}} E_{\lambda_j} \geq 2^{[\frac{n}{2}]}.\] 
Therefore, we can find a collection of linearly independent sections $s_1,\hdots,s_m \in \mathcal{Z}$, where $m = 2^{[\frac{n}{2}]}$. Since 
\[\inf_{\partial M}H = \sqrt{(n-1)^2+4\lambda^2},\] 
Proposition \ref{rigidity} implies that $\hat{\nabla}^M s_a=0$ for each $1\le a\le m$. Standard uniqueness results for ODE imply that $s_1|_p,\hdots,s_m|_p$ are linearly indepdendent for each point $p \in M$.
\end{proof}

\bibliography{ref}

\end{document}